\documentclass[a4paper,twoside]{amsart}
\usepackage{xypic,amscd,amssymb,latexsym,srcltx}
\usepackage{dsfont}
\usepackage{amsthm}
\topmargin-0.5in \setlength{\parskip}{0.2cm}
\def\@setcopyright{}
\makeatother

\newtheorem{lemma}{Lemma}[section]

\newtheorem{theorem}[lemma]{Theorem}
\newtheorem{remark}[lemma]{Remark}
\newtheorem*{acknowledgments*}{ACKNOWLEDGMENTS}
\newtheorem{definition}[lemma]{Definition}

\newtheorem*{thm*}{Main Theorem}
\newtheorem*{remark*}{Remark}
\newtheorem*{lemma*}{Lemma}
\newtheorem*{example*}{[Example]}

\begin{document}
\begin{center}
{\Large \bf Fusion systems of blocks with nontrivial strongly closed subgroups
}\\

\bigskip
\end{center}

\begin{center}
Heguo Liu$^{1}$, Xingzhong Xu$^{1,2}$*, Jiping Zhang$^{3}$

\end{center}
\footnotetext {
1.Department of Mathematics, Hubei University,
Wuhan 430062, China

2. Departament de Matem$\mathrm{\grave{a}}$tiques, Universitat Aut$\mathrm{\grave{o}}$noma de Barcelona, E-08193 Bellaterra,
Spain

3. School of Mathematical Sciences, Peking University, Beijing, 100871, China

Heguo Liu's E-mail: ghliu@hubu.edu.cn

Xingzhong Xu's E-mail: xuxingzhong407@mat.uab.cat, xuxingzhong407@126.com

$\ast$ Corresponding author

Jiping Zhang's E-mail: jzhang@pku.edu.cn

}

\title{}
\def\abstractname{\textbf{Abstract}}

\begin{abstract}\addcontentsline{toc}{section}{\bf{English Abstract}}
In this paper, we find some exotic fusion systems which have nontrivial strongly closed subgroups,
and we prove these fusion systems are also not realizable by $p$-blocks of finite groups.
\hfil\break

\textbf{Key Words:} saturated fusion system,  finite $p$-group, block
\hfil\break \textbf{2000 Mathematics Subject
Classification:} \ 20C20

\end{abstract}

\maketitle

\section{\bf Introduction}

To get the $p$-local information of finite group $G$,
one can forget some information of this group and change the structure of a
group to a category. The objects of this category are all subgroups of a given Sylow
$p$-subgroup $S$ of $G$,
and the morphism set of $Q, P(\leq S)$ is the set $\mathrm{Hom}_{G}(Q, P):=\{c_{g}|Q^{g}\leq P\}$.
This category is a saturated fusion system $\mathcal{F}_{S}(G)$ of finite group $G$ over $S$.

Alperin's theorem for fusion systems tells us that every morphism  in $\mathcal{F}_{S}(G)$ can be generated by
some automorphisms of objects in $\mathcal{F}^{fcr}$(see theorem 2.4).
So we can get all the $p$-local information of $G$ from the automorphisms of objects in $\mathcal{F}^{fcr}$.
The more general concept of a
" saturated fusion system $\mathcal{F}$ over a finite $p$-group $S$ " was introduced by Lluis Puig (See
\cite{BLO, Pu}).
Our main result is the following:

\begin{thm*}  Let $p$ be an odd prime and $P=\langle x,y,z|x^{p}=y^{p}=z^{p}=1, [x,y]=z,
[z,x]=[z,y]=1\rangle\cong p^{1+2}_{+}$. Let $\mathcal{F}_{1}$ and $ \mathcal{F}_{2}$ be a saturated fusion system over $P$ and $A$ respectively. Here,
$A$ is a finite abelian $p$-group. Let $\mathcal{F}= \mathcal{F}_{1}\times \mathcal{F}_{2}$.
 Then
$\mathcal{F}$ is  not a fusion system of a $p$-block of a finite group if and only if
$\mathcal{F}_{1}$ is  not a fusion system of a $p$-block of a finite group.
\end{thm*}

This theorem is motivated by Kessar's problem about fusion systems of blocks: Does the exotic fusion system is realizable by
a fusion system of a block of a finite group? In \cite{Ke}, she had proven that the Solomon fusion system $\mathcal{F}_{\mathrm{Sol}}(3)$
is not a fusion system of a $2$-block of a finite group. In \cite{KeS}, they used their reduction theorem to prove that
the exotic fusion systems over $p_{+}^{1+2}$ do not occur as fusion systems of $p$-blocks of finite groups.

These exotic fusion systems which are studied in \cite{Ke, KeS} have no nontrivial strongly closed subgroups(see \cite[Proposition 4.3(iv)]{Ke},\cite[Lemma 4.17]{RV}).
In this paper, we fine the exotic fusion systems which have nontrivial strongly closed subgroups, and in Theorem 3.4, we
prove these fusion systems are also not realizable by $p$-blocks of finite groups. In fact, our result relies on
methods in \cite{Ke, KeS} and theory of fusion systems of blocks built in \cite{AsKO}.

The paper is organized as follows.
We start in Section 2 with the basic definitions on fusion systems, Alperin's theorem for fusion systems
and factor fusion systems.
In Section 3, we prove the main theorem (see Theorem 3.4).

\section{\bf Preliminaries}

In this section we collect some known results that will be needed later.  For the background theory of
fusion systems, we refer to \cite{AsKO}, \cite{RS}, \cite{St2}.

\begin{definition} A $fusion ~ system$ $\mathcal{F}$ over a finite $p$-group
$S$ is a category whose objects are the subgroups of $S$, and whose
morphism sets $\mathrm{Hom}_{\mathcal{F}}(P,Q)$ satisfy the
following two conditions:

\vskip 0.3cm

(a) $\mathrm{Hom}_{S}(P,Q)\subseteq
\mathrm{Hom}_{\mathcal{F}}(P,Q)\subseteq\mathrm{Inj}(P,Q)$ for all
$P,Q\leq S$.

\vskip 0.3cm

(b) Every morphism in $\mathcal{F}$ factors as an isomorphism in
$\mathcal{F}$ followed by an inclusion.

\end{definition}

\begin{definition} Let $\mathcal{F}$ be a fusion system over a $p$-group
$S$.

$\bullet$ Two subgroups $P,Q$ are $\mathcal{F}$-$conjugate$ if they
are isomorphic as objects of the category $\mathcal{F}$. Let
$P^{\mathcal{F}}$ denote the set of all subgroups of $S$ which are
$\mathcal{F}$-conjugate to $P$.
Since $\mathrm{Hom}_{\mathcal{F}}(P,P)\subseteq\mathrm{Inj}(P,P)$,
we usually write $\mathrm{Hom}_{\mathcal{F}}(P,P)=\mathrm{Aut}_{\mathcal{F}}(P)$ and
$\mathrm{Hom}_{S}(P,P)=\mathrm{Aut}_{S}(P)$.

$\bullet$ A subgroup $P\leq S$ is $fully~automised$ in $\mathcal{F}$
if $\mathrm{Aut}_{S}(P)\in
\mathrm{Syl}_{p}(\mathrm{Aut}_{\mathcal{F}}(P))$.

$\bullet$ A subgroup $P\leq S$ is $receptive$ in $\mathcal{F}$ if it
has the following property: for each $Q\leq S$ and each $\varphi\in
\mathrm{Iso}_{\mathcal{F}}(Q, P)$, if we set
$$N_{\varphi}=\{g\in N_{S}(Q)|\varphi \circ c_{g}\circ \varphi^{-1}\in \mathrm{Aut}_{S}(P)\},$$
then there is $\overline{\varphi}\in
\mathrm{Hom}_{\mathcal{F}}(N_{\varphi},S)$ such that
$\overline{\varphi}|_{Q}=\varphi$. (where $c_{g}:x\longmapsto g^{-1}xg$ for $g\in S$)

$\bullet$ A fusion system $\mathcal{F}$ over a $p$-group $S$ is
$saturated$ if each subgroup of $S$ is $\mathcal{F}$-conjugate to a
subgroup which is fully automised and receptive. This definition is due to \cite{RS}.
\end{definition}

\begin{definition} Let $\mathcal{F}$ be a fusion system over a $p$-group
$S$.

$\bullet$ A subgroup $P\leq S$ is $fully~centralised$ in
$\mathcal{F}$ if $|C_{S}(P)|\geq |C_{S}(Q)|$ for all $Q\in
P^{\mathcal{F}}$.

$\bullet$ A subgroup $P\leq S$ is $fully~normalised$ in
$\mathcal{F}$ if $|N_{S}(P)|\geq |N_{S}(Q)|$ for all $Q\in
P^{\mathcal{F}}$.

$\bullet$ A subgroup $P\leq S$ is $\mathcal{F}$-$centric$ if
$C_{S}(Q)=Z(Q)$ for $Q\in P^{\mathcal{F}}$.

$\bullet$ A subgroup $P\leq S$ is $\mathcal{F}$-$radical$ if
$\mathrm{Out}_{\mathcal{F}}(P)=\mathrm{Aut}_{\mathcal{F}}(P)/\mathrm{Inn}(P)$
is $p$-reduced; i.e., if $O_{p}(\mathrm{Out}_{\mathcal{F}}(P))=1$.

$\bullet$ Let $\mathcal{F}^{r}, \mathcal{F}^{c}, \mathcal{F}^{f}$ denote the full subcategories whose objects are
$\mathcal{F}$-radical, $\mathcal{F}$-centric, and fully~normalised in $\mathcal{F}$ respectively.
Let $\mathcal{F}^{fcr}=\mathcal{F}^{f}\cap\mathcal{F}^{c}\cap\mathcal{F}^{r}$
and $\mathcal{F}^{cr}=\mathcal{F}^{c}\cap\mathcal{F}^{r}$.

$\bullet$ A subgroup $P\leq S$ is $normal$ in $\mathcal{F}$ (denoted
$P\trianglelefteq \mathcal{F}$) if for all $Q,R\in S$ and all
$\varphi\in \mathrm{Hom }_{\mathcal{F}}(Q,R)$, $\varphi$ extends to
a morphism $\overline{\varphi}\in \mathrm{Hom
}_{\mathcal{F}}(QP,RP)$ such that $(P)\overline{\varphi}=P$.

$\bullet$ A subgroup $P\leq S$ is $strongly~closed$ in $\mathcal{F}$
 if no element of $P$ is $\mathcal{F}$-conjugate to an element of
 $S-P$.

\end{definition}

\begin{definition} A saturated fusion system $\mathcal{F}$ over a $p$-group $S$
will be called realizable if $\mathcal{F}=\mathcal{F}_{S}(G)$ for some finite group $G$
with $S\in \mathrm{Syl}_{p}(G)$, and will be called exotic otherwise.
\end{definition}

\begin{theorem}(Alperin's theorem for fusion systems). Let $\mathcal{F}$ be a saturated fusion system over a $p$-group
$S$. Then for each morphism $\varphi\in \mathrm{Iso}_{\mathcal{F}}(P,R)$ in
$\mathcal{F}$, there exist sequences of subgroups of $S$
$$P=P_{0},P_{1},\ldots,P_{k}=R,~~~~~ \mathrm{and }~~~~~ Q_{1},Q_{2},\ldots,Q_{k},$$
and morphism $\psi_{i}\in \mathrm{Aut}_{\mathcal{F}}(Q_{i})$, such
that

$(a)$ $Q_{i}$ is fully~normalised in $\mathcal{F}$,
$\mathcal{F}$-radical, and $\mathcal{F}$-centric for each $i$(that is $Q_{i}\in \mathcal{F}^{fcr}$);

$(b)$ $P_{i-1},P_{i}\leq Q_{i}$ and $\psi_{i}(P_{i-1})=P_{i}$ for
each $i$; and

$(c)$ $\psi=\psi_{k}\circ\psi_{k-1}\circ\cdots\circ\psi_{1}$.

\end{theorem}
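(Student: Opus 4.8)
The plan is to argue by downward induction on $|P|$, proving that the class $\mathcal{D}$ of morphisms admitting a decomposition as in $(a)$--$(c)$ is all of $\mathcal{F}$. First I would record three closure properties of $\mathcal{D}$: it is closed under composition (concatenate the two sequences of $Q_i$'s), under inverses (reverse the sequence and replace each $\psi_i$ by $\psi_i^{-1}\in\mathrm{Aut}_{\mathcal{F}}(Q_i)$), and under restriction to subgroups (if $\widehat\psi\in\mathrm{Aut}_{\mathcal{F}}(\widehat P)$ decomposes via $Q_i,\psi_i$ and $P\leq\widehat P$, set $P_0=P$ and $P_i=\psi_i(P_{i-1})$; the same $Q_i,\psi_i$ witness $\widehat\psi|_{P}\in\mathcal{D}$, since $P_{i-1},P_i\leq\widehat{P}_i\leq Q_{i+1}$). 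The base case is $P=S$: here $\varphi\in\mathrm{Aut}_{\mathcal{F}}(S)$, and $S\in\mathcal{F}^{fcr}$, because $S$ is fully normalised hence fully automised, so $\mathrm{Out}_{\mathcal{F}}(S)$ has order prime to $p$ and thus $O_p(\mathrm{Out}_{\mathcal{F}}(S))=1$, while $C_S(S)=Z(S)$ makes $S$ centric; so $\varphi$ is its own one-term decomposition.

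For the inductive step, fix $\varphi\in\mathrm{Iso}_{\mathcal{F}}(P,R)$ with $P<S$, choose a fully normalised $P'\in P^{\mathcal{F}}$, and pick isomorphisms $\alpha\in\mathrm{Iso}_{\mathcal{F}}(P,P')$ and $\beta\in\mathrm{Iso}_{\mathcal{F}}(R,P')$. Writing $\varphi=\beta^{-1}\circ(\beta\varphi\alpha^{-1})\circ\alpha$ with $\beta\varphi\alpha^{-1}\in\mathrm{Aut}_{\mathcal{F}}(P')$ reduces the problem, via the closure properties, to two tasks: (i) every automorphism of the fully normalised group $P'$ lies in $\mathcal{D}$; and (ii) every isomorphism $\alpha\colon P\xrightarrow{\sim}P'$ onto a fully normalised group lies in $\mathcal{D}$. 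Throughout I would invoke the standard consequence of the Roberts--Shpectorov saturation axiom (Definition 2.2) that a fully normalised subgroup is fully automised, receptive, and fully centralised.

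For task (i): if $P'\in\mathcal{F}^{fcr}$ then $\psi\in\mathrm{Aut}_{\mathcal{F}}(P')$ is a one-term decomposition. Otherwise $P'$ is not centric or not radical, and in each case I would show $N_\psi>P'$ for every $\psi$, so that receptivity extends $\psi$ to a morphism with strictly larger domain, which lies in $\mathcal{D}$ by induction and yields $\psi\in\mathcal{D}$ by restriction. If $P'$ is not centric then, being fully centralised, $C_S(P')\not\leq P'$, while $c_g|_{P'}=\mathrm{id}$ for $g\in C_S(P')$ gives $C_S(P')\leq N_\psi$, so $N_\psi\geq P'C_S(P')>P'$. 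If $P'$ is centric but not radical, let $R\trianglelefteq\mathrm{Aut}_{\mathcal{F}}(P')$ be the preimage of $O_p(\mathrm{Out}_{\mathcal{F}}(P'))\neq1$; as a normal $p$-subgroup it lies in the Sylow subgroup $\mathrm{Aut}_S(P')$, so $R=\mathrm{Aut}_T(P')$ for some $P'<T\leq N_S(P')$, and $\psi\,c_t\,\psi^{-1}\in R\leq\mathrm{Aut}_S(P')$ for $t\in T$ gives $T\leq N_\psi$, whence $N_\psi>P'$. For task (ii): since $P'$ is fully automised and $\alpha\,\mathrm{Aut}_S(P)\,\alpha^{-1}$ is a $p$-subgroup of $\mathrm{Aut}_{\mathcal{F}}(P')$, a Sylow argument yields $\chi\in\mathrm{Aut}_{\mathcal{F}}(P')$ with $\chi\alpha\,\mathrm{Aut}_S(P)\,\alpha^{-1}\chi^{-1}\leq\mathrm{Aut}_S(P')$; then $N_{\chi\alpha}\supseteq N_S(P)>P$, receptivity extends $\chi\alpha$ over $N_S(P)$, induction places the extension in $\mathcal{D}$, and $\alpha=\chi^{-1}(\chi\alpha)$ with $\chi^{-1}\in\mathcal{D}$ by task (i) completes it.

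I expect the genuine technical core to be the two saturation inputs rather than the combinatorics: deriving, from the axiom in Definition 2.2, that a fully normalised subgroup is simultaneously fully automised, receptive, and fully centralised, and carrying out the Sylow-conjugation adjustment of $\alpha$ in task (ii). Once these are in place, the remainder is bookkeeping with the sets $N_\psi$ and the induction on $|P|$, so the main obstacle is organising the saturation consequences cleanly enough that the uniform ``produce an extension with strictly larger domain'' mechanism applies in both the non-centric and non-radical cases.
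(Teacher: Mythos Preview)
The paper does not prove this statement: it appears in Section~2 (``Preliminaries''), where the authors explicitly say they are collecting known results, and Alperin's fusion theorem is simply quoted without argument, with attribution to the standard references \cite{AsKO,RS,St2}. So there is no proof in the paper to compare your proposal against.

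For what it is worth, your sketch is the standard downward induction on $|P|$ (essentially the argument in \cite{BLO} and in the textbooks), and its structure is correct. Two small points you should tighten if you write it out in full. First, in task~(ii) the extension $\overline{\chi\alpha}\colon N_S(P)\to S$ produced by receptivity is a morphism, not a priori an isomorphism; to invoke the induction hypothesis you must factor it as an isomorphism onto its image followed by an inclusion, and then note that the image has order $|N_S(P)|>|P|$. Second, in the non-radical branch of task~(i) the claim ``$R=\mathrm{Aut}_T(P')$ for some $T$'' uses that $P'$ is centric (so that $\mathrm{Aut}_S(P')\cong N_S(P')/Z(P')$ and subgroups of $\mathrm{Aut}_S(P')$ containing $\mathrm{Inn}(P')$ correspond to subgroups of $N_S(P')$ containing $P'$); you handled the non-centric case separately, so this is fine, but it is worth making the dependence explicit.
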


The following is devoted to recalling the basic definitions of
factor fusion systems, we refer to \cite{A5},\cite{AsKO}.

\begin{definition} Let $\mathcal{F}$ be a saturated fusion system over a $p$-group
$S$, and fix $Q\leq S$ and $K\leq \mathrm{Aut}(Q)$. Let $N_{\mathcal{F}}^{K}(Q)\subseteq \mathcal{F}$
be the fusion system over $N_{S}^{K}(Q)$ where $P, R \leq N_{S}^{K}(Q)$,
$$\mathrm{Hom}_{N_{\mathcal{F}}^{K}(Q)}(P, R)=\{\varphi\in \mathrm{Hom}_{\mathcal{F}}(P, R)|\exists~ \overline{\varphi}\in \varphi\in \mathrm{Hom}_{\mathcal{F}}(PQ, RQ) $$ $$ ~~~ ~~~~~~~~~~~~~~~~~~~~~~~~~~~~~~~~~~~~~~~~~~~~~~~
with~ \overline{\varphi}|_{P}=\varphi, \overline{\varphi}(Q)=Q, ~and~ \overline{\varphi}|_{Q}\in K\}.$$
As special cases, set $N_{\mathcal{F}}(Q)=N_{\mathcal{F}}^{\mathrm{Aut}(Q)}(Q)$ and $C_{\mathcal{F}}(Q)=N_{\mathcal{F}}^{\{1\}}(Q)$.
If $Q$ is fully $K$-normalized in $\mathcal{F}$, then $N_{\mathcal{F}}^{K}(Q)$ is a saturated fusion system.
\end{definition}

\begin{definition} Let $\mathcal{F}$ be a saturated fusion system over a $p$-group
$S$, $T$ a strongly closed in $S$ with respect to $\mathcal{F}$,
and $\mathcal{N}=N_{\mathcal{F}}(T)$.

$\bullet$ Set $S^{+}=S/T$ and $\theta: S\longrightarrow
S^{+}$ be the natural map $\theta: x\longrightarrow x^{+}=xT\in S^{+}$.

$\bullet$ We define a category $\mathcal{F}/T$: $\mathcal{F}/T$ and a morphism
$\theta: \mathcal{N}\longrightarrow
\mathcal{F}/T$. The objects of $\mathcal{F}/T$ are the subgroups of $S^{+}$. For $P\leq S$ and
$\alpha\in \mathrm{Hom}_{\mathcal{N}}(P,S)$ define $\alpha^{+}:
P^{+}\longrightarrow S^{+}$ by $\alpha^{+}(x^{+})=\alpha(x)^{+}$. This is well defined as
$T$ a strongly closed in $S$ with respect to $\mathcal{F}$. Now define
$$\mathrm{Hom}_{\mathcal{F}/T}(P^{+},S^{+})=\{\beta^{+}|\beta\in \mathrm{Hom}_{\mathcal{N}}(PT,S)\}.$$
and define $\theta_{P}: \mathrm{Hom}_{\mathcal{N}}(P, S)\longrightarrow
\mathrm{Hom}_{\mathcal{F}/T}(P^{+}, S^{+})$ by $\theta_{P}(\alpha)=\alpha^{+}$. For
$\alpha\in \mathrm{Hom}_{\mathcal{N}}(P,S)$, $\alpha$ extends to
$\hat{\alpha}\in \mathrm{Hom}_{\mathcal{N}}(PT,S)$ and $\hat{\alpha}^{+}=\alpha^{+}$,
so $\theta_{P}$ is well defined and surjective.
\end{definition}

We next look at the definition of products of fusion systems(see \cite{A6, BLO}).

\begin{definition}Let $\mathcal{F}_{1}$ and $ \mathcal{F}_{2}$ be a saturated fusion system over finite $p$-groups
 $S_{1}$ and $S_{2}$ respectively.
$\mathcal{F}_{1}\times \mathcal{F}_{2}$ is fusion system over $S_{1}\times S_{2}$ generated by the set
of all $(\varphi_{1},\varphi_{2})\in \mathrm{Hom}(P_{1}\times P_{2}, Q_{1}\times Q_{2})$ for $\varphi_{i}\in
\mathrm{Hom}_{\mathcal{F}}(P_{i}, Q_{i})$.
\end{definition}

\begin{remark}Let $\mathcal{F}_{1}$ and $ \mathcal{F}_{2}$ be a saturated fusion system over $S_{1}$ and $S_{2}$  respectively.
Let $\mathcal{F}= \mathcal{F}_{1}\times \mathcal{F}_{2}$.
 Then $\mathcal{F}/S_{1}\cong C_{\mathcal{F}}(S_{1})/S_{1} \cong \mathcal{F}_{2}$.
\end{remark}

\section{\bf Proof of the Main theorem}

For sake of completeness, we recall the theory of fusion systems of blocks(see \cite[Part IV]{AsKO}).

Let $k$ be an algebraically closed field of characteristic $p$, $G$ a finite group and $b$ a block of $kG$, that is $b$ is a primitive idempotent of
$Z(kG)$. For any $p$-subgroup $Q$ of $G$ the canonical projection from $kN$ to $kC_{G}(Q)$
induces an algebra morphism $\mathrm{Br}_{Q}^{kG}$ from the subalgebra of fixed points of
$Q$, $(kG)^{Q}$ onto $kC_{G}(Q)$(see \cite{AsKO, BP}). This morphism is known in the literature as
Brauer morphism.

A $(kG, b, G)$-Brauer pair is a pair $(Q, e_{Q})$ where $Q$ is a $p$-subgroup of $G$ such that
$\mathrm{Br}_{Q}^{kG}(b)\neq 0$ and $e_{Q}$ is a block of $kC_{G}(Q)$ such that
$\mathrm{Br}_{Q}^{kG}(b)e_{Q}\neq 0 $. Let $(Q, e_{Q}), (R, e_{R})$ be $(kG, b, G)$-Brauer pair.
We say that $(Q, e_{Q})$ is contained in $(R, e_{R})$ if $Q\leq R$ and
for any primitive idempotent $i\in (kG)^{R}$ such that
$\mathrm{Br}_{Q}^{kG}(i)e_{R}\neq 0$, we have $\mathrm{Br}_{Q}^{kG}(i)e_{Q}\neq 0$.
Here, we write $(Q, e_{Q})\leq (R, e_{R})$. Recall \cite{AsKO, BP}, we know that

(1) Given a $(kG, b, G)$-Brauer pair $(R, e_{R})$ and $Q\leq R$, there exists a unique
$(c, G)$-Brauer pair  $(Q, e_{Q})$ contained in $(R, e_{R})$.

(2) All maximal $(kG, b, G)$-Brauer pair are $G$-conjugate.

\begin{definition}Let $Q\leq P$ be $p$-subgroups of $G$.

$\bullet$ Define
$$\mathrm{Br}_{P,Q}^{kG}: \mathrm{Br}_{Q}^{kG}((kG)^{P})\rightarrow kC_{G}(P)$$
by $\mathrm{Br}_{P,Q}^{kG}(\mathrm{Br}_{Q}^{kG}(a))=\mathrm{Br}_{P}^{kG}(a)$ for $a\in (kG)^{P}$.
If $e_{Q}$ is a block of $kC_{G}(Q)$ and $e_{Q}$ is $P$-stable, then
$\mathrm{Br}_{P,Q}^{kG}(\mathrm{Br}_{Q}^{kG}(e_{Q}))=\mathrm{Br}_{P,Q}^{kG}(e_{Q})=\mathrm{Br}_{P}^{kG}(e_{Q})=e_{Q}$.

$\bullet$ Let $(Q, e_{Q}), (P, e_{P})$ be $(kG, b, G)$-Brauer pair. We write that
$(Q, e_{Q})\unlhd (P, e_{P})$ if $Q$ is a normal subgroup of $P$, $e_{Q}$ is $P$-stable and
$\mathrm{Br}_{P,Q}^{kG}(\mathrm{Br}_{Q}^{kG}(e_{Q})) e_{P}=e_{P}$. That is $e_{Q}e_{P}=e_{P}.$

$\bullet$ For a  $(kG, b, G)$-Brauer pair $(Q, e_{Q})$, let us denote by $N_{G}(Q, e_{Q})$the stabilizer
in $N_{G}(Q)$ of $e_{Q}$.

\end{definition}

For further
standard notation and terminology, we follow \cite{AsKO}, \cite{NT}.

\begin{definition} Let $(S, e_{S})$ is the maximal
$(kG, b, G)$-Brauer pair. For each $Q\leq S$, let $e_{Q}$ denote
the unique block of $kC_{G}(Q)$ such that $(Q, e_{Q})\leq (S, e_{S})$.
The fusion category of $(kG, b, G)$ over $S, e_{S}$ is the category
$\mathcal{F}_{(S, e_{S})}(kG, b, G)$ whose objects are the subgroups of
$S$ and which has morphism sets
$$\mathrm{Mor}_{\mathcal{F}_{(S, e_{S})}(kG, b, G)}(Q, R)=\mathrm{Hom}_{G}((Q, e_{Q}),(R, e_{R}))$$
for $Q, R\leq S$ $\mathrm{(}$see \cite[Part IV, Defintion 2.21]{AsKO}$\mathrm{)}$.
\end{definition}

\begin{definition}If an abstract fusion system $\mathcal{F}\cong\mathcal{F}_{(S, e_{S})}(kG, b, G)$, we say that fusion system $\mathcal{F}$
is block realizable.  By Brauer's Third Main Theorem, any realizable saturated
fusion system is block realizable. Moreover, if $\mathcal{F}$ is not block realizable, then
$\mathcal{F}$ is exotic. $\mathrm{(}$see \cite[Part IV, p. 221]{AsKO}$\mathrm{)}$.
\end{definition}

\begin{theorem}  Let $p$ be an odd prime and $P=\langle x,y,z|x^{p}=y^{p}=z^{p}=1, [x,y]=z,
[z,x]=[z,y]=1\rangle\cong p^{1+2}_{+}$. Let $\mathcal{F}_{1}, \mathcal{F}_{2}$ be a saturated fusion system over $P$ and $A$ respectively. Here,
$A$ is a finite abelian $p$-group.
Let $\mathcal{F}= \mathcal{F}_{1}\times \mathcal{F}_{2}$. Then
$\mathcal{F}$ is  not a fusion system of a $p$-block of a finite group if and only if
$\mathcal{F}_{1}$ is  not a fusion system of a $p$-block of a finite group.
\end{theorem}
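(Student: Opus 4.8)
The plan is to prove both directions by relating the block theory of $\mathcal{F}$ to that of $\mathcal{F}_1$, using the abelian group $A$ as an inert factor. First I would observe the easy direction: if $\mathcal{F}_1$ is realized by a block $(kG_1,b_1)$ with maximal Brauer pair $(P,e_P)$, and $\mathcal{F}_2$ is realized by a block $(kG_2,b_2)$ over $A$ (every saturated fusion system over an abelian $p$-group is realizable, e.g.\ by a suitable extension of $A$, hence block realizable by Brauer's Third Main Theorem), then $\mathcal{F}_1\times\mathcal{F}_2 = \mathcal{F}_{S_1\times S_2}(G_1\times G_2)$ for the product block $b_1\otimes b_2$ of $k[G_1\times G_2]$; one checks the Brauer pairs of the product block are exactly the products of Brauer pairs, so the fusion category splits as a product. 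Hence if $\mathcal{F}$ is \emph{not} block realizable, then $\mathcal{F}_1$ cannot be block realizable either — this gives the contrapositive of the ``if'' half.

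For the harder direction — if $\mathcal{F}_1$ is not a block fusion system then neither is $\mathcal{F}=\mathcal{F}_1\times\mathcal{F}_2$ — I would argue by contradiction: suppose $\mathcal{F}=\mathcal{F}_{(S,e_S)}(kG,b,G)$ with $S=P\times A$. The key structural input is that $A$, viewed as $1\times A\le S$, is strongly closed in $\mathcal{F}$ (indeed normal in $\mathcal{F}$, since $\mathcal{F}=\mathcal{F}_1\times\mathcal{F}_2$ and $\mathcal{F}_2$ is a fusion system over $A$), and likewise $P\times 1$ is strongly closed. By Remark 2.11 together with Definition 2.8, $\mathcal{F}/A \cong \mathcal{F}_1$ and $\mathcal{F}/P \cong \mathcal{F}_2$. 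The plan is to push the block structure through the quotient by the strongly closed subgroup $A$: one uses the theory in \cite[Part IV]{AsKO} on Brauer pairs and normal subgroups, specifically that passing to $C_G(A)$ and the block $e_A$ of $kC_G(A)$ realizes $C_{\mathcal{F}}(A)$, and that the extra abelian direction can be factored out, so that $\mathcal{F}_1 \cong \mathcal{F}/A$ is realized by an associated block (roughly, a block of $N_G(A,e_A)/A$ or of a central extension thereof, after controlling the central quotient using $p^{1+2}_+$ and the fact that $A$ is abelian). This would contradict the hypothesis on $\mathcal{F}_1$.

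The main obstacle is precisely this last step: taking a quotient of a \emph{block} fusion system by a strongly closed subgroup and recognizing the quotient as again a block fusion system. Quotients of fusion systems by strongly closed subgroups (Aschbacher's $\mathcal{F}/T$, Definition 2.8) are saturated when $T\trianglelefteq\mathcal{F}$, but there is no general theorem saying that $\mathcal{F}/T$ is block realizable when $\mathcal{F}$ is; so I would need to exploit the very special situation here — $T=A$ abelian, and the complementary strongly closed subgroup $P\cong p^{1+2}_+$ extraspecial — to build the required group and block by hand. Concretely, I expect to take a maximal Brauer pair $(S,e_S)=(P\times A, e_S)$, analyze $N_G(A, e_A)$ and $C_G(A)$, show $e_A$ has defect group containing $P$ (from $P\times 1$ strongly closed), and then use that $P$ is extraspecial of order $p^3$ together with the known list of such blocks/fusion systems to extract a block on $P$ realizing $\mathcal{F}_1$. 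The clean way to organize this is to first prove a lemma: \emph{if $\mathcal{F}$ over $S=S_1\times S_2$ is block realizable and $S_2$ is abelian, then $\mathcal{F}_1 = \mathcal{F}/S_2$ is block realizable}; the theorem is then immediate. I would prove that lemma using the commuting-product structure of maximal Brauer pairs and the behaviour of the Brauer morphism $\mathrm{Br}_{P,Q}^{kG}$ under the inclusion $C_G(S)\le C_G(S_2)$, reducing to the block $e_{S_2}$ of $kC_G(S_2)$ and its Brauer pairs inside $C_G(S_2)$, which recover exactly $\mathcal{F}_1$ on the $S_1$-component.
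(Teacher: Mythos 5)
Your overall architecture matches the paper's: one direction by realizing the product, and the hard direction ($\mathcal{F}$ block realizable $\Rightarrow$ $\mathcal{F}_1$ block realizable) by localizing at the strongly closed abelian subgroup $A$, passing to the block $e_A$ of $kC_G(A)$, which realizes $C_{\mathcal{F}}(A)$, and then factoring out $A$. Your first direction is fine and is a legitimate alternative to the paper's: the paper invokes Kessar--Stancu (Theorem 6.4) to upgrade ``$\mathcal{F}_1$ block realizable'' to ``$\mathcal{F}_1=\mathcal{F}_P(G_1)$'' and then takes the ordinary fusion system of $G_1\times(A\rtimes\Aut_{\mathcal{F}_2}(A))$, while you use the (standard, but citation-needing) fact that the fusion system of a tensor-product block $b_1\otimes b_2$ of $k[G_1\times G_2]$ is the product of the two block fusion systems. (Minor point: that argument proves ``$\mathcal{F}_1$ block realizable $\Rightarrow\mathcal{F}$ block realizable'', i.e.\ the ``only if'' half of the theorem, not the ``if'' half as you label it.)

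The genuine gap is exactly where you flag it, and your sketch does not contain the ideas that close it. Your organizing lemma --- if $\mathcal{F}$ over $S_1\times S_2$ is block realizable and $S_2$ is abelian then $\mathcal{F}/S_2$ is block realizable --- is never proved, and ``reducing to the block $e_{S_2}$ of $kC_G(S_2)$ and its Brauer pairs\dots which recover exactly $\mathcal{F}_1$'' glosses over two real difficulties. First, for $A\le T\le S$ the centralizer $C_{C_G(A)/A}(T/A)$ is in general strictly larger than $C_G(T)/A$, so the blocks $e_T$ of $kC_G(T)$ do not simply push down to the quotient; the paper handles this by passing to the unique block $\varepsilon_T$ of the preimage $L_T$ of $C_{C_G(A)/A}(T/A)$ that covers $e_T$, using that $L_T/C_G(T)$ is a $p$-group so that $\mu_A(\varepsilon_T)$ is again a block (Nagao--Tsushima, Ch.~5, Thm.~8.11). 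Second, even after setting up this correspondence of Brauer pairs one only obtains the inclusion $C_{\mathcal{F}}(A)/A\subseteq\mathcal{F}_{(S/A,\overline{\varepsilon}_S)}(kC_G(A)/A,\overline{e}_A,C_G(A)/A)$, not the equality your lemma asserts: surjectivity on morphisms would require lifting elements stabilizing the pushed-down blocks $\overline{\varepsilon}_R$ back to elements stabilizing the $e_R$, which is nontrivial and which the paper does not attempt. The paper's way out is not to prove equality but to combine the inclusion with the Ruiz--Viruel classification over $p^{1+2}_+$: if $\mathcal{F}_1$ were not block realizable it would be one of the three exotic systems $RV_1,RV_2,RV_3$ (so $p=7$), and the inclusion then forces the genuine block fusion system of $\overline{e}_A$ to be one of these exotic systems as well (either equal to $\mathcal{F}_1$, or $RV_2\subseteq RV_3$), contradicting Kessar--Stancu's result that none of them is a block fusion system. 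You allude to ``the known list of such blocks/fusion systems'', but without either proving the equality claimed in your lemma or making this containment-plus-classification step explicit, the hard direction of your proposal remains incomplete.
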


\begin{proof}[\textbf{Proof}] Let $S:=P\times A$. Suppose that $\mathcal{F}$ is a fusion system of a $p$-block of a finite group,
we can set $\mathcal{F}=\mathcal{F}_{(S, e_{S})}(kG, b, G)$ where $G$ is a finite group, $b$ is a block of $G$, and
$(S, e_{S})$ is a maximal $(kG, b, G)$-Brauer pair.

Since $\mathcal{F}= \mathcal{F}_{1}\times \mathcal{F}_{2}$, we have $A$ is strongly closed in $\mathcal{F}$.
Let $(A, e_{A})$ be a $(kG, b, G)$-Brauer pair such that $(A, e_{A})\leq (S, e_{S})$.
By \cite[Part IV, Proposition 3.19(b)]{AsKO}, we have
$$C_{\mathcal{F}}(A)=N_{\mathcal{F}}^{K}(A)=\mathcal{F}_{(S, e_{S})}(kC_{G}(A), e_{A}, C_{G}(A))$$
by \cite[Part IV, Proposition 3.8]{AsKO}. Here, $K=\mathrm{Aut}_{C_{G}(A)}(A)=\{\mathrm{Id}_{A}\}$.

By Definition 2.7, we have $C_{\mathcal{F}}(A)/A\cong \mathcal{F}/A\cong \mathcal{F}_{1}$. Since $A$ is abelian, we have $A\leq C_{G}(A)$.
As before, we denote by $\mu_{A}$ the natural $k$-algebra homomorphism
$$\mu_{A}: kC_{G}(A)\longrightarrow k(C_{G}(A)/A)$$
$$~~~~\sum_{x}\lambda_{x}x\longmapsto \sum_{x}\lambda_{x}\overline{x},$$
where $\overline{x}=xA\in C_{G}(A)/A$.
As $e_{A}$ is a block of $kC_{G}(A)$ and $|C_{G}(A):C_{C_{G}(A)}(A)|=1$, then $\overline{e}_{A}:=\mu(e_{A})$ is a block of $k(C_{G}(A)/A)$
by \cite[Ch. 5, Theorem 8.11]{NT}.

In the following, we will check that
$$C_{\mathcal{F}}(A)/A\subseteq\mathcal{F}_{(S/A, \overline{\varepsilon}_{S})}(kC_{G}(A)/A, \overline{e}_{A},C_{G}(A)/A).$$
Here, $\overline{\varepsilon}_{S}$ will be defined in the following text.

First, we will prove that there is an $C_{G}(A)$-poset equivalence between the subset of
$(kC_{G}(A), e_{A}, C_{G}(A))$-Brauer pairs consisting of those pairs who contains $(A, e_{A})$,
and the subset of
$(kC_{G}(A)/A, \overline{e}_{A}, C_{G}(A)/A)$-Brauer pairs by the map $\Phi$
$$(T, e_{T})\longmapsto \Phi(T, e_{T}).$$
Here, $(T, e_{T})$ is a $(kC_{G}(A), e_{A}, C_{G}(A))$-Brauer pair such that $(T, e_{T})\leq (S, e_{S})$. We will define the $\Phi(T, e_{T})$ in the following.

Since $A\unlhd T\leq S$, set $\overline{T}=T/A$.
It is easy to see that $C_{G}(T)/A\leq C_{C_{G}(A)/A}(\overline{T})$, we can set
that $C_{C_{G}(A)/A}(\overline{T})=L_{T}/A$. Here, $L_{T}\leq C_{G}(A)$ and $C_{G}(T)\unlhd L_{T}$.
Since $L_{T}$ acts on $T/A$ and $A$ trivially, we have
$L_{T}/C_{G}(T)$ is a $p$-group by \cite[Appendix A, Lemma A.2]{AsKO}.
Since $(T, e_{T})$ is a $(kC_{G}(A), e_{A}, C_{G}(A))$-Brauer pair, thus
$(T, e_{T})$ is also a $(kG, b, G)$-Brauer pair  by \cite[Part IV, Lemma 3.5]{AsKO}.
That is $e_{T}$ is a block of $kC_{C_{G}(A)}(T)=kC_{G}(T)$.
Then there exists a unique block $\varepsilon_{T}$ of $kL_{T}$ such that $\varepsilon_{T}$
covers $e_{T}$ by \cite[Ch.5, Corollary 5.6]{NT}.

Set $\overline{\varepsilon}_{T}:=\mu_{A}(\varepsilon_{T})$, since
$L_{T}/C_{G}(T)$ is a $p$-group, thus $|L_{T}: C_{L_{T}}(T)|$ is power of prime. As
$C_{L_{T}}(T)\leq C_{L_{T}}(A)$,
we can see that $|L_{T}:C_{L_{T}}(A)|$ is power of prime. Hence $\overline{\varepsilon}_{T}:=\mu_{A}(\varepsilon_{T})$ is a block of $k(L_{T}/A)=kC_{C_{G}(A)/A}(\overline{T})$
by \cite[Ch. 5, Theorem 8.11]{NT}.

Since $(T, e_{T})$ is a $(kC_{G}(A), e_{A}, C_{G}(A))$-Brauer pair,
thus $\mathrm{Br}_{T}^{kC_{G}(A)}(e_{A})e_{T}\neq 0$. As
$e_{T}$ is covered by a unique $\varepsilon_{T}$, it implies that
$\mathrm{Br}_{T}^{kC_{G}(A)}(e_{A})\cdot$ $\varepsilon_{T}\neq 0$.
Since $A\unlhd C_{G}(A)$, we have $\mathrm{Ker}\mu_{A}\subseteq J(kC_{G}(A))$ by \cite[Ch.5, Lemma 2.7]{NT}.
So $\mu_{C}(\mathrm{Br}_{T}^{kC_{G}(A)}(e_{A})\varepsilon_{T})\neq 0$.
Also $\overline{e}_{A}:=\mu_{A}(e_{A})$ is a block of $kC_{G}(A)$ and\[ \xymatrix{
  (kC_{G}(A))^{T} \ar[d]_{\mu_{A}} \ar[r]^{\mathrm{Br}_{T}^{kC_{G}(A)}}
                & kC_{G}(T) \ar[d]^{\mu_{A}}  \\
  (kC_{G}(A)/A)^{\overline{T}}  \ar[r]_{\mathrm{Br}_{\overline{T}}}
                & kC_{C_{G}(A)/A}(\overline{T})          }
                \] commutes,
thus $$\mu_{A}(\mathrm{Br}_{T}^{kC_{G}(A)}(e_{A}))=\mathrm{Br}_{\overline{T}}^{kC_{G}(A)/A}(\overline{e}_{A}).$$
Hence $\mathrm{Br}_{\overline{T}}^{kC_{G}(A)/A}(\overline{e}_{A})\overline{\varepsilon}_{T}\neq 0$.
Therefore, $(\overline{T}, \overline{\varepsilon}_{T})$ is a $(kC_{G}(A)/A, \overline{e}_{A}, C_{G}(A)/A)$-Brauer pair.
So we can define $ \Phi(T, e_{T})= (\overline{T}, \overline{\varepsilon}_{T})$ and $\Phi$ is injective.

We show that the map $\Phi$ is inclusion preserving. By \cite[Part IV, Proposition 2.14]{AsKO}, we only need to prove it when
$(A, e_{A})\leq (R,e_{R})\unlhd (T, e_{T})$. So we have $R\unlhd T$, $\mathrm{Br}_{T, R}^{kC_{G}(A)}(e_{R})e_{T}=e_{T}$ and $e_{R}$ is $T$-stable.
That is $e_{R}e_{T}=e_{T}$. Hence $\varepsilon_{R}\varepsilon_{T}\neq 0$ because $\varepsilon_{R},
\varepsilon_{T}$ cover $e_{R}, e_{T}$ respectively. Now, we can see that
$0\neq \mu_{A}(\varepsilon_{R}\varepsilon_{T})= \overline{\varepsilon}_{R}\overline{\varepsilon}_{T}$.
Also let $t\in T$, $\varepsilon_{R}^{t}$ is a block of $kL_{R}$ because $T$ acts on $L_{R}$. (Here, the definition of $L_{R}$
like $L_{T}$ above.)
But there exists a unique block $\varepsilon_{R}$ of $kL_{R}$ such that $\varepsilon_{R}$
covers $e_{R}$, thus $\varepsilon_{R}^{t}=\varepsilon_{R}$ because $\varepsilon_{R}^{t}$ covers
$e_{R}^{t}=e_{R}$. (Here, $e_{R}$ is $T$-stable.) Hence
$\overline{\varepsilon}_{R}$ is $\overline{T}$-stable.
Therefore, $(\overline{R},\overline{\varepsilon}_{R})\unlhd (\overline{T}, \overline{\varepsilon}_{T})$,
that means $\Phi(R, e_{R})\unlhd \Phi(T, e_{T})$. So $\Phi$ is a poset bijection.

Let $g\in C_{G}(A)$, we have
 $\Phi(T, e_{T})^{g}=(\overline{T}, \overline{\varepsilon}_{T})^{g}
=(\overline{T}^{g}, \overline{\varepsilon}_{T}^{g})$. Since $\varepsilon_{T}$ cover
$e_{T}$, thus $\varepsilon_{T}^{g}$ covers $e_{T}^{g}$. Here,
$\mu_{A}(\varepsilon_{T})^{g}=\mu_{A}(\varepsilon_{T}^{g})$. That is
$\Phi(T, e_{T})^{g}=\Phi(T^{g}, e_{T}^{g})$. Hence $\Phi$ is compatible with
$C_{G}(A)$-action.

Now, we will prove that
$(\overline{S}, \overline{\varepsilon}_{S})$ is a maximal $(kC_{G}(A)/A, \overline{e}_{A}, C_{G}(A)/A)$-Brauer pair.

 Suppose that there exists a $(kC_{G}(A)/A, \overline{e}_{A},$ $ C_{G}(A)/A)$-Brauer pair
 $(R/A, \overline{\varepsilon}_{R})\gneq (\overline{S}, \overline{\varepsilon}_{S})$.
Hence, $R\gneq S$ and $\mathrm{Br}_{R/A}^{kC_{G}(A)/A}(\overline{e}_{A})\overline{\varepsilon}_{R}\neq 0$.
So $\mathrm{Br}_{R}^{kC_{G}(A)}(e_{A})\varepsilon_{R}\neq 0$.
There exists a block $e_{R}$ of $kC_{G}(R)$ such that $\varepsilon_{R}$ covers $e_{R}$
and $\mathrm{Br}_{R}^{kC_{G}(A)}(e_{A})e_{R}\neq 0$. So
$(R, e_{R})$ is a $(kC_{G}(A), e_{A}, C_{G}(A))$-Brauer pair.
By \cite[Part IV, Lemma 3.5]{AsKO}, we have $(R, e_{R})$ is a
$(kG, b, G)$-Brauer pair. That is a contradiction. Hence
$(\overline{S}, \overline{\varepsilon}_{S})$ is a maximal $(kC_{G}(A)/A, \overline{e}_{A}, C_{G}(A)/A)$-Brauer pair.
So we can see that $\Phi$ is surjective.

By \cite[Part IV, Proposition 3.2]{AsKO}, we have
$\mathcal{F}_{(S/A, \overline{\varepsilon}_{S})}(kC_{G}(A)/A, \overline{e}_{A}, C_{G}(A)/A)$ is a saturated fusion system
over $S/A$.

Second, let $A\leq R\leq S$, we can see that
$$N_{C_{G}(A)}(R, e_{R})/A\leq N_{C_{G}(A)/A}(\overline{R}, \overline{\varepsilon}_{R}).$$
That is $$C_{\mathcal{F}}(A)/A\subseteq \mathcal{F}_{(S/A,
\overline{\varepsilon}_{S})}(kC_{G}(A)/A, \overline{e}_{A},C_{G}(A)/A).$$

Since $C_{\mathcal{F}}(A)/A$ is a saturated fusion system over $S/A\cong p^{1+2}_{+}$,
if $C_{\mathcal{F}}(A)/A$ is not a fusion system of a $p$-block of a finite group, then
$C_{\mathcal{F}}(A)/A$ is one of the following saturated fusion systems:
\[
\begin{array}{|c|c|c|c|}\hline
\mathrm{Name}&\mbox{$\mathrm{Out}_{\mathcal{F}}(7^{1+2}_{+})$} &\mbox{$|\mathcal{F}^{cr}|$}&
\mbox{ $\mathrm{Aut}_{\mathcal{F}}(V)$}\\\hline
RV_{1}& 6^{2}: 2  & 6+2 & \mathrm{SL}_{2}(7):2, \mathrm{GL}_{2}(7) \\
RV_{2}&D_{16}\times 3 & 4+4 & \mathrm{SL}_{2}(7):2, \mathrm{SL}_{2}(7):2 \\
RV_{3}& SD_{32}\times 3  & 8  & \mathrm{SL}_{2}(7):2\\\hline
\end{array}
\]
by \cite{KeS}. And this table also can be fined from \cite[Lemma 4.17]{RV}.

Here, $C_{\mathcal{F}}(A)/A\subseteq \mathcal{F}_{(S/A,
\overline{\varepsilon}_{S})}(kC_{G}(A)/A, \overline{e}_{A},C_{G}(A)/A).$
Hence either $$C_{\mathcal{F}}(A)/A= \mathcal{F}_{(S/A,
\overline{\varepsilon}_{S})}(kC_{G}(A)/A, \overline{e}_{A},C_{G}(A)/A),$$
or $C_{\mathcal{F}}(A)/A\cong RV_{2}$ and $\mathcal{F}_{(S/A,
\overline{\varepsilon}_{S})}(kC_{G}(A)/A, \overline{e}_{A},C_{G}(A)/A)\cong RV_{3}$ by
\cite{KeS, RV}.

However, $RV_{1}, RV_{2}, RV_{3}$ are not fusion systems of a $p$-blocks of finite groups.
That is a contradiction.
Hence, $C_{\mathcal{F}}(A)/A\cong\mathcal{F}_{1}$ is a fusion system of a $p$-block of a finite group.

Conversely suppose that $\mathcal{F}_{1}$ is a fusion system of a $p$-block of a finite group.
Since $P\cong p^{1+2}_{+}$, we have that $\mathcal{F}_{1}$ is not exotic by \cite[Theorem 6.4]{KeS}.
Let $\mathcal{F}_{1}=\mathcal{F}_{P}(G_{1})$ for a finite group $G_{1}$ with $P\in \mathrm{Syl}_{p}(G_{1})$.
Set $G= G_{1}\times (A\rtimes \mathrm{Aut}_{\mathcal{F}}(A))$, then $P\times A\in \mathrm{Syl}_{p}(G)$.
So $\mathcal{F}\cong \mathcal{F}_{P\times A}(G).$ That is $\mathcal{F}$ is a fusion system of a $p$-block of a finite group.
\end{proof}


\end{document}